\newcommand\C{\mathbb{C}}
\newcommand\N{\mathbb{N}}
\newcommand{\dist}{\operatorname{dist}}
\newcommand{\re}{\operatorname{Re}}
\newcommand{\im}{\operatorname{Im}}
\newtheorem{theorem}{Theorem}[section]
\newtheorem*{theorem*}{Theorem} 
\newtheorem{lemma}[theorem]{Lemma}
\newtheorem{remark}{Remark}
\begin{document}

\title{Zero and Uniqueness Sets for Fock spaces}

\author[D. Aadi]{D. Aadi}
\address{D. AADI, Mohammed V University in Rabat, Faculty of sciences, CeReMAR -LAMA- B.P. 1014 Rabat, Morocco.}
\email{driss\_aadi@um5.ac.ma \ -- \ aadidriss@gmail.com }

\author[Y. Omari]{Y. OMARI}
\address{Y. Omari, Mohammed V University in Rabat, Faculty of sciences, CeReMAR -LAMA- B.P. 1014 Rabat, Morocco.}
\email{omariysf@gmail.com}

\subjclass[2010]{Primary 30H20, 30D20; Secondary 30C15, 46E22}
\keywords{ Fock spaces, Entire functions, Zero sets, Uniqueness sets.}
\date{\today}

\maketitle

\begin{abstract}
We characterize zero sets for which every subset remains a zero set too in the Fock space $\mathcal{F}^p$, $1\leq p<\infty$. We are also interested in the study of a stability problem for some examples of uniqueness set with zero excess in Fock spaces.
\end{abstract}

\section{Introduction}

Let $\beta$ be a positive real number.  The Gaussian measure on the complex plane $\mathbb{C}$  is defined by
\begin{equation}\label{gausian}
  d\mu_{\beta}(z):=\frac{\beta}{2\pi}e^{-\frac{\beta}{2}|z|^2}dA(z),  \qquad z\in\mathbb{C},
\end{equation}
where $dA$ is the Euclidean area measure. The Fock space $\mathcal{F}_{}^p:=\mathcal{F}_{\pi}^p$,  where $1\leq p<\infty$,  is the collection of entire functions $f : \mathbb{C} \rightarrow \mathbb{C}$ such that
\[   \|f\|_{p}:= \left( \int_{\mathbb{C}}\left|f(z)\right|^p d\mu_{p\pi}(z)\right)^{\frac{1}{p}}<\infty.  \]
The space $\mathcal{F}^p$ endowed with the norm $\|.\|_{p}$ is a vector Banach space, for every $p\geq1$. For the particular case when $p=2$, $\mathcal{F}_{}^2$ is a reproducing kernel Hilbert space with the reproducing kernel given by
$$K(z,w):=e^{\pi\Bar{z}w},\quad z,w\in\C.$$
For instance see the textbook \cite[Chapitre 2]{Zhu} and references therein.  \\

A countable set $Z=\{z_{n}\}_{n\in\mathbb{N}}\subseteq\C$ is called a zero set for $\mathcal{F}_{}^p$ if there exists a function $f\in\mathcal{F}_{}^p\setminus\{0\}$ such that the zero set $\{z\in\C\ :\ f(z)=0\}$ of $f,$ counting multiplicities, coincides with $Z$. We say that  $Z$ is a uniqueness set for $\mathcal{F}_{}^p$ if the unique function of $\mathcal{F}^p$ that vanishes on $Z$ is the zero function.
 It is known that a complete characterization, of zero and uniqueness sets for the Fock spaces, stills remain an open question, we refer to \cite{Zhu,zhu2011maximal,aadi2018zero}.\\
 
Due to the distinctiveness of Fock spaces among other spaces of analytic functions, there exist particular sets; uniqueness set with zero excess.  These are those uniqueness sets that they become zero sets by removing just one point.  Numerous examples of uniqueness sets with zero excess are known for $\mathcal{F}^p$. The first typical example is the square lattice for $\mathcal{F}^{p}$, when $2<p<\infty$, and the square lattice without one point for every $1<p\leq 2$. More generally, for every $0\leq \nu \leq 1$, the sequence $\Gamma_\nu$ is a uniqueness set with zero excess for $\mathcal{F}^p$, for every $\frac{2}{1+\nu}< p < \frac{2}{\nu}$, where
 \begin{equation}
 \Gamma_{\nu}:=\left\{\gamma_{m,n}:=m+in\ :\ (m,n)\in \mathcal{I}_\nu \right\},
\end{equation}
and where $\mathcal{I}_\nu:=\left(\mathbb{Z}\times(\mathbb{Z}\setminus\{0\})\right)\cup \left(\mathbb{Z}_{-}\times\{0\}\right)\cup\{(m+\nu,0)\}_{m\geq0}$, see \cite{lyubarskii1992frames,zhu2011maximal}. It is simple to see that $\Gamma_\nu$ is a separated sequence in the complex plan and of critical density for $\mathcal{F}_{}^p$ in the study of interpolating and sampling problems, see for example \cite{Se92,zhu2011maximal,Zhu}. Motivated by the results in \cite{omari2020stability,zhu2011maximal,horowitz1974zeros}, we are interested in the study of a stability problem of sequences $\Gamma_\nu$, for every $0\leq\nu\leq 1$.
\\

The second kind of uniqueness sets with zero excess for $\mathcal{F}_{}^p$ is an example of a non separated sequence localized at the real and imaginary lines. It constitutes the zero set of the sin-cardinal type function 
\begin{equation}
    S(z):=(z^2-1)\frac{\sin\big(\frac{\pi}{2}z^2 \big)}{\pi z^2},\quad z\in\C,
\end{equation}
and it is given by
\begin{equation}\label{AsLyuSe}
\Gamma:=\left\{\pm\sqrt{2n},\ \pm i\sqrt{2n} \ :\ n\geq 1\right\}\cup\left\{\pm 1\right\}.
\end{equation}
This sequence was constructed by Ascensi, Lyubarski and Seip in \cite{ascensi2009phase} for the Hilbert case. It stills of the same kind for $\mathcal{F}_{}^p$, for every $1<p<\infty$.\\

As in the work of the second author in \cite{omari2020stability}, we are interested in the study of a stability problem of the sequences $\Gamma$ and $\Gamma_\nu$, for every $0\leq\nu\leq 1$, for the spaces $\mathcal{F}^p$, for fixed $1<p<\infty$. Namely, If $\Lambda=\{\lambda_\sigma\ :\ \sigma\in\Sigma\}$ is a sequence of complex numbers, where $\Sigma=\Gamma$ or $\Gamma_\nu$. We write $\lambda_\sigma=\sigma e^{\delta_\sigma}e^{i\theta_\sigma}$, where $\delta_\sigma, \theta_\sigma\in\mathbb{R}$ for every $\sigma\in\Sigma$. We are interesting in giving optimal conditions on $(\delta_\sigma)$ and $(\theta_\sigma)$ for which $\Lambda$ fails to be a uniqueness set with zero excess for $\mathcal{F}^p$, for a fixed $1<p<\infty$. Before stating our main results, we need some notations. For $\Lambda:=\{\lambda_\gamma:=\gamma e^{\delta_\gamma}e^{i\theta_\gamma}\ :\ \gamma\in\Gamma_\nu\}$, we denote
$$\hat{\delta}(\Lambda):=\liminf_{R\rightarrow \infty} \frac{1}{\log R} \sum_{|\gamma|\leq R}\delta_\gamma\ \ \mbox{and}\ \ \delta(\Lambda):=\limsup_{R\rightarrow \infty} \frac{1}{\log R} \sum_{|\gamma|\leq R}\delta_\gamma.$$
Our first main result in this paper is the following theorem.
\begin{theorem}\label{thm1}
Let $0\leq \nu\leq 1$ and let $\Lambda=\{\lambda_\gamma\}_{\gamma\in\Gamma_\nu}$ be a sequence of complex numbers. We write $\lambda_\gamma = \gamma e^{\delta_\gamma}e^{i\theta_\gamma}$, for every $\gamma\in\Gamma_\nu$, where $\delta_\gamma,\theta_\gamma\in\mathbb{R}$. If 
\begin{enumerate}
    \item $\Lambda$ is separated,
    \item The sequences $(\gamma^2\delta_\gamma)_{\gamma\in\Gamma_\nu}$ and $(\gamma^2\theta_\gamma)_{\gamma\in\Gamma_\nu}$ are bounded,
    \item $$\nu-\frac{2}{p}<\hat{\delta}(\Lambda) \leq \delta(\Lambda) < \nu+1-\frac{2}{p}.$$
\end{enumerate}
Then, $\Lambda$ is a uniqueness set with zero excess for $\mathcal{F}^p$, whenever $\frac{2}{1+\nu}<p < \frac{2}{\nu}$.
\end{theorem}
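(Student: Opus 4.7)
The overall strategy is classical stability: construct an explicit entire function $F$ whose zero set is $\Lambda$ with one excess point removed, check directly that $F\in\mathcal{F}^p$ (so that $\Lambda\setminus\{\lambda_0\}$ is a zero set), and then show that any element of $\mathcal{F}^p$ vanishing on all of $\Lambda$ must be a multiple of $F$, the quotient being forced by growth to be identically zero. This mirrors the scheme used by the second author in \cite{omari2020stability} for a related problem.

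Let $\gamma_0\in\Gamma_\nu$ be the excess point witnessing the zero-excess property of $\Gamma_\nu$, and let $F_0$ denote the canonical product for $\Gamma_\nu\setminus\{\gamma_0\}$; the relevant properties of $F_0$, in particular $F_0\in\mathcal{F}^p$ for $\frac{2}{1+\nu}<p<\frac{2}{\nu}$, are known from \cite{lyubarskii1992frames,zhu2011maximal}. Writing $\lambda_0:=\lambda_{\gamma_0}$, I would define
$$F(z):=F_0(z)\prod_{\gamma\in\Gamma_\nu\setminus\{\gamma_0\}}\frac{1-z/\lambda_\gamma}{1-z/\gamma}\,e^{P_\gamma(z)},$$
where each $P_\gamma$ is a polynomial of degree at most two, adjusted so that the infinite product converges absolutely and locally uniformly on $\mathbb{C}\setminus(\Gamma_\nu\cup\Lambda)$. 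Condition (2) is exactly what makes such a compensation work: since $|\delta_\gamma|,|\theta_\gamma|=O(|\gamma|^{-2})$, the logarithmic summands decay like $|\gamma|^{-4}$ at infinity. The separation assumption (1) then guarantees that the zeros of $F$ are precisely $\Lambda\setminus\{\lambda_0\}$, with correct multiplicities.

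The core of the argument is the pointwise asymptotic
$$\log|F(z)|=\log|F_0(z)|-D(|z|)\log|z|+O(\log|z|),$$
valid uniformly for $z$ outside a union of small discs around $\Lambda\cup\Gamma_\nu$, where $D(R):=\sum_{|\gamma|\le R}\delta_\gamma$. To obtain it, I would split the logarithmic sum at $|\gamma|=|z|$. In the high-frequency range $|\gamma|>|z|$, a Taylor expansion of the Weierstrass factors combined with condition (2) leaves an $O(1)$ remainder. In the low-frequency range $|\gamma|\le|z|$, the leading correction $-D(|z|)\log|z|$ emerges from the factors $|\gamma/\lambda_\gamma|=e^{-\delta_\gamma}$ after a Jensen-type averaging, while the argument perturbations $\theta_\gamma$ contribute only bounded error terms by a symmetry-and-cancellation argument. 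Combined with the known sharp asymptotic for $\log|F_0|$ on generic circles, condition (3) then translates exactly into the assertion that the polynomial-type factor in $|F(z)|/e^{\pi|z|^2/2}$ has effective exponent in the open interval $(-2,\,-2/p)$: the upper bound provides $F\in\mathcal{F}^p$ via $\delta(\Lambda)<\nu+1-\frac{2}{p}$, and the lower bound, via $\hat{\delta}(\Lambda)>\nu-\frac{2}{p}$, will be used for uniqueness.

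Uniqueness follows by the standard division argument: if $f\in\mathcal{F}^p$ vanishes on $\Lambda$, then $g:=f/F$ is entire with a zero at $\lambda_0$, and comparing the pointwise bound $|f(z)|\le Ce^{\pi|z|^2/2}$ with the lower estimate for $|F(z)|$ off the exceptional discs forces $g$ to be a polynomial of degree strictly less than one, hence constant; the prescribed zero at $\lambda_0$ then yields $g\equiv 0$. The principal obstacle, as in \cite{omari2020stability}, will be the pointwise asymptotic above: the low-frequency sum is sensitive to cancellation between the linear and quadratic terms of the compensators $P_\gamma$, and one must rule out the possibility that the argument perturbations $\theta_\gamma$ inject a spurious $|z|^2$ contribution. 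This is where the interplay of hypotheses (1) and (2) becomes essential, and a careful analysis on circles of radii avoiding $\Lambda$ — rather than purely averaged estimates — will be required.
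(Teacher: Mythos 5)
Your overall scheme---a perturbed canonical product for $\Lambda$, two-sided estimates of $|F(z)|e^{-\frac{\pi}{2}|z|^2}$ in which the counting sums $D(R)=\sum_{|\gamma|\le R}\delta_\gamma$ enter through exponents $\hat{\delta}(\Lambda)$ and $\delta(\Lambda)$, followed by a division argument---is exactly the paper's route; the paper simply imports the two-sided estimate ready-made as Lemma \ref{lem1} from \cite{omari2020stability} instead of re-deriving it, and your claimed asymptotic $\log|F(z)|=\log|F_0(z)|-D(|z|)\log|z|+O(\log|z|)$ is the logarithmic form of that lemma. The zero-set half of your argument is sound and matches the paper: the upper estimate together with $\delta(\Lambda)<\nu+1-\frac{2}{p}$ reduces membership to the convergence of $\int_\C (1+|z|)^{-p(1+\nu-\delta)}\,dA(z)$.

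The genuine gap is in your uniqueness endgame. You claim that comparing $|f(z)|\lesssim e^{\frac{\pi}{2}|z|^2}$ with the lower estimate for $|F|$ off exceptional discs ``forces $g$ to be a polynomial of degree strictly less than one, hence constant.'' Two things defeat this. First, the lower estimate is direction-dependent: it has the form $\frac{(1+|\im(z)|)^M}{(1+|z|)^{\nu-\hat{\delta}+M}}\dist(z,\Lambda)$, and near the real axis the factor $(1+|\im(z)|)^M(1+|z|)^{-M}$ costs a full power $(1+|z|)^{-M}$ with $M$ an unspecified (possibly large) constant, so pointwise you only get that $g$ is a polynomial of degree at most roughly $\nu-\hat{\delta}+M$. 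Second, even ignoring $M$, condition (3) only yields $\nu-\hat{\delta}<\frac{2}{p}$, and $\frac{2}{p}$ can be arbitrarily close to $1+\nu\le 2$ (take $\nu=1$ and $p$ near $1$), so the pointwise exponent may exceed $1$; a linear $g$, which can perfectly well vanish at $\lambda_0$, is then not excluded, and your appeal to the prescribed zero cannot finish the proof. The paper closes this by \emph{integrating}: once $h:=f/G_\Lambda$ is known to be a polynomial of some degree $k\ge 0$, the $L^p$ version of the lower bound gives
\begin{equation*}
\|f\|_p^p\ \gtrsim\ \int_\C \frac{dA(z)}{(1+|z|)^{p(\nu-k-\hat{\delta})}},
\end{equation*}
where the directional weight $(1+|\im(z)|)^{pM}(1+|z|)^{-pM}$ has integrated out harmlessly; finiteness forces $k+\hat{\delta}<\nu-\frac{2}{p}$, contradicting $\hat{\delta}(\Lambda)>\nu-\frac{2}{p}$ for \emph{every} $k\ge 0$, constants included. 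So the decisive step is the integrated norm estimate, not a pointwise degree count; as written, your argument cannot conclude without it.
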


Now, for the Ascensi-Lyubarskii-Seip sequence $\Gamma$. If $\Lambda:=\{\lambda_\gamma:=\gamma e^{\delta_\gamma}e^{i\theta_\gamma}\ :\ \gamma\in\Gamma\}$ is a sequence of $\C$. We will denote by $\Delta_n$ the quantity
$$\Delta_n:=\sum_{|\gamma|=\sqrt{2n}} \delta_\gamma=\delta_{\sqrt{2n}}+\delta_{-\sqrt{2n}}+\delta_{i\sqrt{2n}}+\delta_{-i\sqrt{2n}}.$$
Our second main result is the following.
\begin{theorem}\label{thm2}
Let $\Gamma:=\left\{\pm\sqrt{2n},\ \pm i\sqrt{2n} \ :\ n\geq 1\right\}\cup\{\pm 1\}$ and let $\Lambda=\{\lambda_\gamma:=\gamma e^{\delta_\gamma}e^{i\theta_\gamma}\ :\ \gamma\in\Gamma\}$ be a sequence of complex numbers. Suppose that
\begin{enumerate}
    \item There exists $c>0$ such that $\left|\lambda_\gamma - \lambda_{\gamma'}\right| \geq c/\min\{|\gamma|,|\gamma'|\}$, for every $\gamma, \gamma' \in\Gamma$,
    \item The sequences $(\gamma^2\delta_\gamma)_{\gamma\in\Gamma}$ and $(\gamma^2\theta_\gamma)_{\gamma\in\Gamma}$ are bounded,
    \item $\Delta(\Lambda) := \underset{n\rightarrow\infty}{\limsup}\  \frac{1}{\log n} \left|\underset{k=1}{\overset{n}{\sum}} \Delta_k\right|< \frac{1}{2\max\{p,q\}}$, where $q$ is the H\"older conjugate number of $p$.
\end{enumerate}
Then, $\Lambda$ is a uniqueness set with a  zero excess for $\mathcal{F}^p$, where $1<p<\infty$.
\end{theorem}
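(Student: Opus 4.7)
The plan is to construct an entire function $S_\Lambda$ whose zero set is $\Lambda$ with exactly one point removed (say $\lambda_{+1}$), and then prove the theorem in two parts: first, that $\Lambda\setminus\{\lambda_{+1}\}$ is a zero set for $\mathcal{F}^p$ by showing $S_\Lambda\in\mathcal{F}^p$; second, that any $f\in\mathcal{F}^p$ vanishing on all of $\Lambda$ must be identically zero. The whole argument mirrors the unperturbed case (the function $S$) but has to track how the perturbations $\delta_\gamma,\theta_\gamma$ deform both the size and the distribution of zeros.

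\textbf{Step 1 (canonical product).} Grouping the four perturbed points at each radius $\sqrt{2n}$, I would set
\begin{equation*}
S_\Lambda(z):=(z-\lambda_{-1})\prod_{n\geq 1}\left(1-\frac{z^2}{\lambda_{\sqrt{2n}}\lambda_{-\sqrt{2n}}}\right)\left(1-\frac{z^2}{\lambda_{i\sqrt{2n}}\lambda_{-i\sqrt{2n}}}\right),
\end{equation*}
mimicking the factorization $S(z)=(z^2-1)\cdot\tfrac12\prod_{n\geq 1}(1-z^4/(4n^2))$ of the unperturbed sine-cardinal. By hypothesis (2), $\lambda_{\sqrt{2n}}\lambda_{-\sqrt{2n}}=-2n(1+O(n^{-2}))$ and similarly on the imaginary axis, so the product converges uniformly on compacta and defines an entire function of order $2$ whose zero set is exactly $\Lambda\setminus\{\lambda_{+1}\}$.

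\textbf{Step 2 (growth and $L^p$-integrability).} The core analytic work is to compare $\log|S_\Lambda(z)|$ with $\log|S(z)|$. Writing $\lambda_\gamma=\gamma e^{\delta_\gamma+i\theta_\gamma}$ and expanding the logarithm of $S_\Lambda/S$ factor by factor, the tail is rapidly convergent thanks to condition (2), while the partial sums at scale $|z|^2/2$ produce the main contribution. A Jensen-type computation should yield, away from the zeros and on circles of suitably chosen radii,
\begin{equation*}
\log|S_\Lambda(z)|=\log|S(z)|-\Bigl(\sum_{k\leq |z|^2/2}\Delta_k\Bigr)\log|z|+O(1).
\end{equation*}
Combined with the classical estimate $|S(z)|\asymp \operatorname{dist}(z,\Gamma)\,e^{\pi|z|^2/2}/|z|$ and with the separation hypothesis (1), an integration in polar coordinates then shows that $\int_{\C}|S_\Lambda|^p e^{-p\pi|z|^2/2}dA<\infty$ as soon as $p\,\Delta(\Lambda)<\tfrac12$, which is part of hypothesis (3). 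Hence $S_\Lambda\in\mathcal{F}^p$.

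\textbf{Step 3 (uniqueness).} Suppose $f\in\mathcal{F}^p\setminus\{0\}$ vanishes on all of $\Lambda$. Then $g:=f/S_\Lambda$ is entire and $g(\lambda_{+1})=0$. Using the standard pointwise bound $|f(z)|\leq C\|f\|_p e^{\pi|z|^2/2}$ together with the lower estimate for $|S_\Lambda(z)|$ on a sequence of circles $|z|=r_n\to\infty$ staying away from $\Lambda$ (available via Step 2 and the separation (1)), one obtains $|g(z)|\leq C|z|^{\alpha+o(1)}$ on those circles, with $\alpha$ governed by $q\,\Delta(\Lambda)<\tfrac12$, the other half of hypothesis (3). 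A Phragm\'en--Lindel\"of argument in order-$2$ sectors then forces $g$ to be a polynomial of small degree, and the zero-excess property of $\Gamma$ for $\mathcal{F}^p$ (passed to the perturbation by the same growth comparison) shows $g$ must be constant. Since $g(\lambda_{+1})=0$, we conclude $g\equiv 0$ and hence $f\equiv 0$.

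\textbf{Main obstacle.} The delicate point is Step 2: extracting the precise term $\bigl(\sum_{k\leq |z|^2/2}\Delta_k\bigr)\log|z|$ from the infinite product with error terms sharp enough to see the critical threshold $\tfrac{1}{2\max\{p,q\}}$. This is where the boundedness $\gamma^2\delta_\gamma,\gamma^2\theta_\gamma=O(1)$ in (2) becomes essential: it kills the non-radial and individual fluctuations, leaving only the coherent radial average $\Delta_n$ visible in the leading asymptotic. The two-sided use of this threshold (once with $p$, once with $q$) explains why $\max\{p,q\}$ appears in (3) and shows that the same control simultaneously delivers $S_\Lambda\in\mathcal{F}^p$ and the Phragm\'en--Lindel\"of forcing $g\equiv 0$.
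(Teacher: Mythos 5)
Your overall architecture (canonical product, two-sided comparison with the unperturbed sine-cardinal governed by the partial sums $\sum_{k\le |z|^2/2}\Delta_k$, integrability for the zero-set half, quotient-plus-growth for uniqueness) matches the paper's, but Step 1 contains a genuine error: the grouped factor $\bigl(1-\frac{z^2}{\lambda_{\sqrt{2n}}\lambda_{-\sqrt{2n}}}\bigr)$ does not vanish at $\lambda_{\sqrt{2n}}$ and $\lambda_{-\sqrt{2n}}$. Its zeros are the two square roots of $\lambda_{\sqrt{2n}}\lambda_{-\sqrt{2n}}\approx -2n$, hence they lie near $\pm i\sqrt{2n}$ (wrong axis!); and even after correcting the sign, a quadratic of the form $1-cz^2$ has antipodal roots, so it can only encode the \emph{geometric mean} $\pm\sqrt{2n}\,e^{(\epsilon_++\epsilon_-)/2}$ of the two perturbations, whereas hypothesis (2) does not give $\lambda_{-\gamma}=-\lambda_\gamma$. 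Consequently your $S_\Lambda$ vanishes on a different perturbed set, $f/S_\Lambda$ need not be entire, and both halves of the argument collapse as written. The repair is exactly the paper's definition: keep the \emph{linear} factors and take $G_\Lambda(z)=\lim_{r\to\infty}\prod_{|\lambda|\le r}(1-z/\lambda)$; condition (2) makes $1/\lambda_\gamma+1/\lambda_{-\gamma}=O(|\gamma|^{-3})$, which is what makes this limit converge without exponential convergence factors.

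Two further points. First, in Step 3 the claimed bound $|g(z)|\le C|z|^{\alpha+o(1)}$ on full circles $|z|=r_n$ is unavailable: near the real and imaginary axes $|G_\Gamma(z)|e^{-\frac{\pi}{2}|z|^2}$ is exponentially small (that is where all the zeros sit), so the quotient admits only an exponentially bad bound there; you would need Phragm\'en--Lindel\"of in the four sectors of opening $\pi/2$, which is the critical aperture for order-$2$ functions and requires extra care. The paper avoids this entirely: from the lower bound of its Lemma \ref{lem2} it deduces that $hG_\Gamma/((z-\gamma)P_{2M})\in\mathcal{F}^p$ and invokes the fact that $\Gamma\setminus\{\gamma\}$ is a \emph{maximal zero sequence} for $\mathcal{F}^p$ to force $h$ polynomial, then gets the contradiction $1/p<2\delta-k<1/p$ by a second integration against the explicit weight of Lemma \ref{lem3} (whose content — integrability of $G_\Gamma$ against $d\nu_{p,\alpha,\beta}$ iff $\beta>1/p$, with the mass concentrated on the diagonals — is precisely what your ``$O(1)$ on suitable circles'' glosses over, since the non-radial factor $\bigl((1+|z|^2)/(1+|\im z^2|)\bigr)^{\pm M}$ matters exactly there). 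Second, you have the roles of $p$ and $q$ swapped: dividing out one zero gives the weight $(1+|z|)^{-p(1-2\delta)}$, so membership $G_\Lambda/(z-\lambda)\in\mathcal{F}^p$ needs $1-2\delta>1/p$, i.e.\ $\Delta(\Lambda)<\frac{1}{2q}$, while the uniqueness half needs $\Delta(\Lambda)<\frac{1}{2p}$ — harmless under hypothesis (3) because of the $\max\{p,q\}$, but the bookkeeping is the opposite of what you wrote and matters near the threshold.
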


Another extreme case of sequences, we are interesting in, is motivated by the zero set of the sin-cardinal type function 
\[ s(z)=\frac{\sin\big(\frac{\pi}{2} z^2\big)}{z^2} \in\mathcal{F}^{p}.\]
The zero set of $s$, denoted by $Z(s)$, is a zero sequence for $\mathcal{F}^p$, for every $1\leq p<\infty$. However, if we remove the subset which belongs to the imaginary axis from  $Z(s)$, the remaining part is not a zero set anymore for $\mathcal{F}^{p}$. Such result can be viewed as a consequence of the  Lindel\"{o}f's theorem, see \cite[Theorem 2.10.1]{Boa}. Therefore, a natural question is: which zero set for $\mathcal{F}^{p}$ remains a zero set too for $\mathcal{F}^{p}$, even an infinite subset was removed? \\

Actually, the example above is a variant to the one given by Zhu in \cite{zhu1993zeros}. This phenomena is one of the main deference between Fock spaces and Hardy spaces and even Bergman spaces of the unit disk where zero sets are well stable \cite{duren1970theory,hedenmalm2012theory}.\\

In the following theorem, we give a complete description of zero sets for which all their sub-sequences are also zero sets for $\mathcal{F}^p.$
\begin{theorem}\label{thm3}
Let $Z =\{z_n\}_{n\in\N}$ be a zero set for $\mathcal{F}^p$, $1\leq p<\infty$. The following statements are equivalents
\begin{enumerate}
    \item Every subset of $Z$ is a zero set for $\mathcal{F}^p$,
    \item  $Z$ satisfies  
    \begin{equation}\label{SufficientCond}
        \underset{n\in\mathbb{N}}{\sum}\frac{1}{|z_n|^2} < \infty.
    \end{equation}
\end{enumerate}
\end{theorem}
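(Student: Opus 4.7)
The plan is to handle the two implications separately. For $(2)\Rightarrow(1)$, fix any subset $W\subseteq Z$. Since the Blaschke-type hypothesis $\sum 1/|z_n|^{2}<\infty$ descends to $W$, the canonical genus-one product
\[
P_W(z)=z^{m_W}\prod_{w\in W,\,w\neq 0}\Bigl(1-\frac{z}{w}\Bigr)e^{z/w}
\]
(with $m_W$ the multiplicity of $0$ in $W$) converges and is entire with zero set exactly $W$. By Borel's theorem on canonical products, the convergence of $\sum 1/|w|^{\rho_1}$ at its convergence exponent $\rho_1\le 2$ forces $P_W$ to have order $\le 2$ and \emph{minimal} type at order $2$; that is, $|P_W(z)|\le C_\varepsilon e^{\varepsilon|z|^{2}}$ for every $\varepsilon>0$. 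Choosing $\varepsilon<\pi/2$,
\[
\int_{\mathbb{C}}|P_W(z)|^{p}\,e^{-p\pi|z|^{2}/2}\,dA(z)\le C_\varepsilon^{p}\int_{\mathbb{C}}e^{-p(\pi/2-\varepsilon)|z|^{2}}\,dA(z)<\infty,
\]
so $P_W\in\mathcal{F}^p$ and $W$ is a zero set.

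For $(1)\Rightarrow(2)$ I argue by contrapositive, assuming $\sum 1/|z_n|^{2}=\infty$. Partition $\C\setminus\{0\}$ into finitely many angular sectors $S_1,\ldots,S_K$ of opening $<\pi/4$ and put $W_k:=Z\cap S_k$. Pigeonhole supplies an index $k$ with $\sum_{w\in W_k}1/|w|^{2}=\infty$. For each such $w$ the phase of $w^{-2}$ lies in an arc of length $<\pi/2$, so some unimodular constant $e^{i\alpha_k}$ yields the uniform bound $\re(e^{i\alpha_k}/w^{2})\ge c/|w|^{2}$ with $c>0$. Summing gives
\[
\Bigl|\sum_{w\in W_k,\,|w|\le R} w^{-2}\Bigr|\ge c\sum_{w\in W_k,\,|w|\le R}\frac{1}{|w|^{2}}\longrightarrow\infty\quad\text{as }R\to\infty.
\]
Every $f\in\mathcal{F}^p\setminus\{0\}$ is entire of order $\le 2$ and type $\le\pi/2$ via the standard pointwise estimate $|f(z)|\lesssim \|f\|_p\,e^{\pi|z|^{2}/2}$, so Lindel\"of's theorem (\cite[Theorem 2.10.1]{Boa}) forces its zero sequence $\{\zeta_n\}$ to satisfy $\bigl|\sum_{|\zeta_n|\le R}\zeta_n^{-2}\bigr|=O(1)$. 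The divergence above thus rules out $W_k$ as a zero set, contradicting (1).

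The principal technical obstacle lies in $(2)\Rightarrow(1)$: one must leverage the borderline hypothesis $\sum 1/|w|^{2}<\infty$ to extract the quantitative minimal-type estimate rather than merely an order bound. Should a direct argument be preferred to invoking Borel's theorem, one can split the logarithmic sum $\log|P_W(z)|=\sum_w(\log|1-z/w|+\re(z/w))$ at $|w|=2|z|$, using $\log|E_1(u)|\lesssim |u|^{2}$ on the tail (whose contribution is $o(|z|^{2})$ since $\sum_{|w|>2|z|}|w|^{-2}\to 0$) and $\log|E_1(u)|\lesssim |u|$ on the head (controlled via the consequence $n(r)=o(r^{2})$ of the Blaschke-type condition). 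The converse implication is comparatively mechanical once the angular pigeonhole isolating a non-cancelling subseries has been identified.
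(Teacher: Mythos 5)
Your proposal is correct and takes essentially the same approach as the paper: the hard implication $(1)\Rightarrow(2)$ is exactly the paper's argument (its Lemma \ref{lemmaCN} plus the eightfold sector decomposition), namely splitting $Z$ into angular sectors narrow enough that the terms $w^{-2}$ cannot cancel, so that $\re\bigl(e^{i\alpha_k}w^{-2}\bigr)\geq c|w|^{-2}$ forces the Lindel\"of sums $S(r)$ to blow up, contradicting Theorem \ref{Lindlof} for a Fock function of order $2$ and type at most $\pi/2$. The only differences are cosmetic: for $(2)\Rightarrow(1)$ the paper just cites the sufficient condition \cite[Theorem 5.3]{Zhu} where you reprove it via the genus-one canonical product and a minimal-type estimate (your splitting argument at $|w|=2|z|$ is sound, though attributing the minimal-type conclusion to ``Borel's theorem'' is loose), and your blanket claim that Lindel\"of bounds $\bigl|\sum_{|\zeta_n|\leq R}\zeta_n^{-2}\bigr|$ for \emph{every} nonzero $f\in\mathcal{F}^p$ tacitly uses that either $\rho_f=2$ with finite type (where Theorem \ref{Lindlof} applies) or $\rho_f<2$ (where the sum converges absolutely) --- a one-line point the paper instead settles by first pinning the order down to exactly $2$ via \cite[Theorem 5.1]{Zhu}.
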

Before stating the proofs of our main results, we give first some remarks:
\begin{enumerate}

\item In Theorem \ref{thm1}, if $\nu=1$ we then get $1<p<2$. Actually, the result remains valid for $p=2$ and this case was treated in \cite{omari2020stability}.  Theorem \ref{thm1} gives a result analogous to those related to complete interpolating sequences for the Paley-Wiener spaces, see \cite{avdonin1974question,lyubarskii1997complete}, and small Fock spaces \cite{baranov2015sampling,omari2021complete}.
\item Note that if there  exists a positive integer $N$ such that 
\begin{equation}
    \underset{n\geq 0}{\sup}\ \frac{n+1}{N}\left|\underset{k=n+1}{\overset{n+N}{\sum}}\Delta_k\right|<\frac{1}{2\max\{p,q\}}\label{avd}
\end{equation}
then $\underset{N\rightarrow\infty}{\lim} \frac{1}{\log N}\left|\underset{k=0}{\overset{N}{\sum}}\Delta_k\right|<\frac{1}{2\max\{p,q\}}$ and the converse is not true, see \cite[Lemma 5.4]{omari2020stability}. On the other hand, in Theorem \ref{thm2}, for $p=2$ we have $q=2$. This case was treated in \cite{omari2020stability}. Theorem \ref{thm2} with the Avdonin's type condition \eqref{avd} appears like the result proved by Lyubarskii and Seip in \cite{lyubarskii1997complete} concerning complete interpolating sequences for Paley-Wiener spaces. Such result generalizes those by Kadet and Avdonin for the Hilbert case, see \cite{kadets1964exact,avdonin1974question}.
\item The conditions on the sequences $(\delta_\gamma)$ and $(\theta_\gamma)$ in Theorem \ref{thm1} are optimal. The proof is similar to theorem 1.5  and proposition 5.3 in \cite{omari2020stability}.
\item  An interesting fact appears in the proof of Theorem \ref{thm2} (namely Lemma \ref{lem2}) is a confirmation of the result which confirms that $\mathcal{F}^{p}$ and $\mathcal{F}^{q}$, $p>q$ do not share the same zero sets, a result that we have already got in \cite{aadi2018zero}. We provided a sequence with positive Beurling uniform density, while the example that we can construct here, by a precise choice of $\delta$, is of null lower Beurling density.
\end{enumerate}

We end this section with some words on notation: throughout this paper, the notation $A\lesssim B$ means that $A\leq cB$ for a certain positive constant $c$, and the notation $A\asymp B$ will be used to say that $A\lesssim B$ and $B\lesssim A$ hold in the same time. The paper is organized as follows: In the next we state some key Lemmas containing estimates of some modified infinite products. Section \ref{Secproof} is devoted to prove Theorems \ref{thm1} and \ref{thm2}. Theorem \ref{thm3} will be proved in the last section. 

\section{Some lemmas}
In this section, we introduce some modified Weierstrass products. These functions will play an important role in the proof of our main results. First, we recall that for every $0\leq \nu\leq 1$, the sequence $\Gamma_\nu=\{\gamma_{m,n}\ :\ m,n\in\mathbb{Z}\}$ is given by 
$$\left\{\gamma_{m,n}:=m+in\ :\ (m,n)\in\mathbb{Z}\times\left(\mathbb{Z}\setminus\{0\}\right) \right\}\cup \mathbb{Z}_{-}\cup\{m+\nu\}_{m\geq 0}.$$
If $\Lambda=\{\lambda_{m,n}\ :\ m,n\in\mathbb{Z}\}$ is a sequence of complex numbers. We will write $\lambda_{m,n}:=\gamma_{m,n} e^{\delta_{m,n}}e^{i\theta_{m,n}}$, where $\delta_{m,n}, \theta_{m,n} \in\mathbb{R}$, for every $m,n\in\mathbb{Z}$. We associate with $\Lambda$ the following infinite product
    $$G_\Lambda(z) := (z-\lambda_{0,0})\prod_{m,n\in\mathbb{Z}}\ ^{\prime} \left(1-\frac{z}{\lambda_{m,n}}\right)\exp\left[\frac{z}{\gamma_{m,n}}+\frac{z^2}{2\gamma_{m,n}^2}\right],\quad z\in\C.$$
The product with the prime is taken over all integers $m$ and $n$ such that $(m, n) \neq (0, 0)$. The following lemma provides an estimates of the function $G_\Lambda$.
\begin{lemma}[\cite{omari2020stability}, Lemma 3.2]\label{lem1} If $\Lambda$ satisfies the conditions of Theorem \ref{thm1}, $G_\Lambda$ is an entire function vanishing exactly on $\Lambda$ and verifying $$\frac{(1+|\im(z)|)^M}{(1+|z|)^{\nu-\hat{\delta}+M}} \dist(z,\Lambda) \lesssim |G_\Lambda(z)|e^{-\frac{\pi}{2}|z|^2} \lesssim \frac{(1+|z|)^{-\nu+\delta+M}}{(1+|\im(z)|)^M} \dist(z,\Lambda), \ z\in\mathbb{C},$$
for some positive constant $M$, where $\delta=\delta(\Lambda)+\varepsilon$ and $\hat{\delta}=\hat{\delta}(\Lambda)-\varepsilon$ for a positive $\varepsilon$ small enough.
\end{lemma}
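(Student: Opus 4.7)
The plan is to compare $G_{\Lambda}$ with the unperturbed Weierstrass canonical product
\[
G_{\Gamma_\nu}(z)=(z-\gamma_{0,0})\prod_{m,n\in\mathbb{Z}}{}'\left(1-\frac{z}{\gamma_{m,n}}\right)\exp\!\left[\frac{z}{\gamma_{m,n}}+\frac{z^2}{2\gamma_{m,n}^2}\right],
\]
for which the classical sigma-function calculation on the perturbed lattice $\Gamma_\nu$ gives the two-sided estimate
\[
|G_{\Gamma_\nu}(z)|e^{-\frac{\pi}{2}|z|^2}\asymp \frac{(1+|\mathrm{Im}\,z|)^{M_0}}{(1+|z|)^{\nu+M_0}}\,\dist(z,\Gamma_\nu)
\]
for a suitable $M_0>0$; the factor $(1+|\mathrm{Im}\,z|)^{M_0}$ arises from the Kronecker-type double-series for $\log|G_{\Gamma_\nu}|$ along horizontal lines, and the power $(1+|z|)^{-\nu}$ reflects the missing half-row of lattice points indexed by $\{m+\nu\}_{m\geq 0}$. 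The task then reduces to showing that the perturbation factor $H(z):=G_{\Lambda}(z)/G_{\Gamma_\nu}(z)$ satisfies $(1+|z|)^{-\delta-\varepsilon}\lesssim |H(z)|\lesssim (1+|z|)^{\delta+\varepsilon}$, with $\delta(\Lambda),\hat\delta(\Lambda)$ playing the expected roles on the two sides.

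To carry this out, I would write $H(z)=\prod{}'\gamma_{m,n}/\lambda_{m,n}\cdot(1-z/\lambda_{m,n})/(1-z/\gamma_{m,n})$ (the exponential compensators cancel since $G_\Lambda$ keeps the $\gamma_{m,n}$-based regularizers) and take logarithms:
\[
\log|H(z)|=-\sum{}'\delta_{m,n}+\sum{}'\log\!\left|\frac{\lambda_{m,n}-z}{\gamma_{m,n}-z}\right|.
\]
The first sum, after the Abel-type rearrangement $\sum_{|\gamma|\leq R}\delta_\gamma$, is precisely $\log R$ times a quantity sandwiched between $\hat\delta(\Lambda)-\varepsilon$ and $\delta(\Lambda)+\varepsilon$ for $R$ large. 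For the second sum one writes
\[
\log\!\left|\frac{\lambda_{m,n}-z}{\gamma_{m,n}-z}\right|=\log\!\left|1+\frac{\lambda_{m,n}-\gamma_{m,n}}{\gamma_{m,n}-z}\right|,
\]
exploits $|\lambda_{m,n}-\gamma_{m,n}|\lesssim|\gamma_{m,n}|(|\delta_{m,n}|+|\theta_{m,n}|)\lesssim 1/|\gamma_{m,n}|$ (by hypothesis (2)), and splits the summation into a short-range piece (the finitely many $\gamma_{m,n}$ inside, say, the disc $|w-z|\leq |z|/2$), which is controlled by the separation assumption (1), and a long-range piece, which is absolutely summable by a standard $\sum 1/|\gamma|^3$ argument. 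Taken together this produces the polynomial perturbation factors $(1+|z|)^{\pm\delta\pm\varepsilon}$.

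The short-range factor containing the unique lattice point $\gamma_{m_0,n_0}$ closest to $z$ must be isolated before summing: its logarithm is $\log|\lambda_{m_0,n_0}-z|-\log|\gamma_{m_0,n_0}|+O(1)$, and combining it with the analogous isolated factor in $G_{\Gamma_\nu}$ replaces $\dist(z,\Gamma_\nu)$ by $\dist(z,\Lambda)$, which is the factor appearing in the statement. This is the step where separation of $\Lambda$ is used: it guarantees $|z-\lambda_{m_0,n_0}|$ is comparable to $\dist(z,\Lambda)$ and that no other factor is exceptionally small.

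The principal technical obstacle is the accurate bookkeeping of the powers of $(1+|z|)$ and $(1+|\mathrm{Im}\,z|)$ while passing from $\Gamma_\nu$ to $\Lambda$: the bound on $\gamma^2\delta_\gamma$ is used at the critical borderline where replacing $\gamma_{m,n}$ by $\lambda_{m,n}$ inside $\log|1-z/\lambda_{m,n}|$ could produce a divergent contribution, and one must check that after Abel summation only the logarithmic growth controlled by $\hat\delta(\Lambda),\delta(\Lambda)$ survives, while the contributions involving $(\gamma^2\theta_\gamma)$ cancel to leading order by oddness. Once these estimates are assembled, multiplying the classical bound for $|G_{\Gamma_\nu}|$ by the two-sided bound for $|H|$ yields the claimed inequalities with $M=M_0+\varepsilon$.
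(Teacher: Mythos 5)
First, a point of comparison: the paper does not prove this lemma at all --- it imports it verbatim as Lemma 3.2 of \cite{omari2020stability} --- so your attempt is measured against the proof in that reference, which, like your sketch, factors $G_\Lambda$ through the unperturbed lattice function. At that level of strategy you are on the right track, but two of your key intermediate claims are wrong, and they sit exactly at the hard part of the argument. First, your baseline estimate for $G_{\Gamma_\nu}$ cannot hold as stated: for $\nu=0$ the product $G_{\Gamma_0}$ is (up to normalization) the Weierstrass $\sigma$-function of $\mathbb{Z}+i\mathbb{Z}$, which satisfies the \emph{isotropic} equivalence $|\sigma(z)|e^{-\frac{\pi}{2}|z|^2}\asymp \dist(z,\Gamma_0)$; your claimed two-sided bound with weight $(1+|\im z|)^{M_0}(1+|z|)^{-\nu-M_0}$ fails in the upper-bound direction on the real axis. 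You have the attribution backwards: the anisotropic factors $(1+|\im z|)^{\pm M}$ in the lemma are slack generated by the \emph{perturbation} sums, not by $G_{\Gamma_\nu}$. Symmetrically, your claim that $H=G_\Lambda/G_{\Gamma_\nu}$ obeys isotropic bounds $(1+|z|)^{-\delta-\varepsilon}\lesssim|H(z)|\lesssim(1+|z|)^{\delta+\varepsilon}$ is unjustified and, combined with your baseline claim, would yield a two-sided estimate for $G_\Lambda$ with a \emph{single} weight --- strictly stronger than the lemma, whose upper and lower weights deliberately differ by $\bigl((1+|z|)/(1+|\im z|)\bigr)^{2M}$. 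That gap is there because the hypotheses only bound $\gamma^2\delta_\gamma$ and $\gamma^2\theta_\gamma$ with arbitrary signs and phases, so the sums over the critical annulus $|\gamma|\sim|z|$ produce genuinely direction-dependent contributions; in particular your assertion that the $\theta_\gamma$-contributions ``cancel to leading order by oddness'' has no basis, since no symmetry whatsoever is assumed on the perturbations.

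Second, the identity $\log|H(z)|=-\sum{}'\delta_{m,n}+\sum{}'\log\bigl|(\lambda_{m,n}-z)/(\gamma_{m,n}-z)\bigr|$ splits $\log|H|$ into two individually divergent series: $|\delta_\gamma|$ may be $\asymp|\gamma|^{-2}$ and $\sum_{\gamma\in\Gamma_\nu}|\gamma|^{-2}=\infty$ in the plane (indeed the partial sums growing like $\log R$ is the entire point of $\hat{\delta}(\Lambda)$ and $\delta(\Lambda)$), while for $|\gamma|\gg|z|$ one has $\log|(\lambda_\gamma-z)/(\gamma-z)|=\delta_\gamma+O(|z|/|\gamma|^3)$, so the second series diverges in tandem. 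Convergence --- which you also need, and silently assume, to know that $G_\Lambda$ is entire and vanishes exactly on $\Lambda$ --- holds only after pairing each $-\delta_\gamma$ with its matching product factor and exploiting the cancellation term by term; your ``standard $\sum 1/|\gamma|^3$ long-range argument'' is valid only after this pairing, which your bookkeeping as written does not perform. (The closing ``$M=M_0+\varepsilon$'' is likewise not meaningful: $M$ must absorb the worst-case anisotropic losses, not an $\varepsilon$.) In short, the skeleton (unperturbed product times perturbation ratio, isolate the nearest point to trade $\dist(z,\Gamma_\nu)$ for $\dist(z,\Lambda)$, Abel summation for the radial part) matches the cited proof, but the two estimates you assert as known or routine are precisely where the actual work of \cite[Lemma 3.2]{omari2020stability} lies, and as stated both are false.
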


On the other hand, for the Ascensi-Lyubarskii-Seip sequence given in \cite{ascensi2009phase} by
$$\Gamma =\left\{\pm\sqrt{2n},\ \pm i\sqrt{2n}\ :\ n\geq 1\right\}\cup\{\pm 1\},$$
we associate the modified sin-cardinal function  $G_\Gamma(z):=\frac{z^2-1}{\pi z^2}\sin\left(\frac{\pi}{2}z^2\right)$. Now, if $\Lambda=\{\lambda_\gamma:=\gamma e^{\delta_\gamma}e^{i\theta_\gamma}\ :\ \gamma\in\Gamma \}$ is a sequence of complex numbers, we associate with $\Lambda$ the infinite product 
$$G_\Lambda(z):=\lim_{r\rightarrow\infty}\prod_{\lambda\in\lambda,\ |\lambda|\leq r} \left(1-\frac{z}{\lambda}\right),\quad z\in\C.$$
According to the proof of \cite[Theorem 1.10]{omari2020stability}, we have the following lemma that gives an estimate of the function $G_\Lambda$.

\begin{lemma}\label{lem2}
If $\Lambda$ satisfies the conditions of Theorem \ref{thm2}, the infinite product $G_\Lambda$ converges uniformly on every compact set of $\C$ and verifies:
\[
\frac{\dist(z,\Lambda)}{\dist(z,\Gamma)}\frac{(1+|\im z^2|)^M}{(1+|z|)^{2\delta+2M}}|G_\Gamma(z)|\lesssim |G_\Lambda(z)|,
\]
\[
|G_\Lambda(z)| \lesssim \frac{\dist(z,\Lambda)}{\dist(z,\Gamma)}\frac{(1+|z|)^{2\delta+2M}}{(1+|\im z^2|)^M}|G_\Gamma(z)|,
\]
for every $z\in\C\setminus\Gamma$, where $\delta=\Delta(\Lambda)+\varepsilon$ for a small positive $\varepsilon.$
\end{lemma}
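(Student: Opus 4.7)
The plan is to compare $G_\Lambda$ with $G_\Gamma$ by analyzing their ratio. Since $\Gamma$ enjoys the fourfold symmetry $\{\pm\sqrt{2n},\pm i\sqrt{2n}\}$, the unperturbed product $G_\Gamma$ converges without exponential convergence factors (each orbit contributes $1-z^4/(4n^2)$). The natural first step is therefore to establish absolute convergence of $G_\Lambda$ by grouping its factors over orbits $|\gamma|=\sqrt{2n}$ and using the perturbation bound $\delta_\gamma,\theta_\gamma=O(1/|\gamma|^2)$ from hypothesis~(2). A short computation of the form
\begin{equation*}
\prod_{|\gamma|=\sqrt{2n}}\Bigl(1-\frac{z}{\lambda_\gamma}\Bigr)-\prod_{|\gamma|=\sqrt{2n}}\Bigl(1-\frac{z}{\gamma}\Bigr)=O(1/n^2)\cdot\text{poly}(z)
\end{equation*}
for $|z|\leq\sqrt{n}$ then upgrades the existing convergence of $G_\Gamma$ to that of $G_\Lambda$, uniformly on compacta.

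For the two-sided estimate I would factor out $G_\Gamma$ by writing
\begin{equation*}
\frac{G_\Lambda(z)}{G_\Gamma(z)}=\text{const}\cdot\prod_{\gamma\in\Gamma}e^{-\delta_\gamma-i\theta_\gamma}\cdot\frac{\lambda_\gamma-z}{\gamma-z}
\end{equation*}
(again with fourfold grouping), and take the logarithm term by term. Splitting the sum into a tail piece $|\gamma|\geq 2|z|$, a middle piece $|z|/2\leq|\gamma|\leq 2|z|$, and a head piece $|\gamma|\leq|z|/2$, I would treat each separately. In the tail, a second-order Taylor expansion together with hypothesis~(2) gives a summand of size $O(|z|^2/|\gamma|^4)$ after the symmetric cancellation within each orbit, producing a bounded contribution. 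The middle piece is where the points of $\Lambda$ can be close to $z$: here one replaces the factors $|\lambda_\gamma-z|$ and $|\gamma-z|$ by $\dist(z,\Lambda)$ and $\dist(z,\Gamma)$ up to bounded multiplicative error (using the separation condition~(1) of Theorem~\ref{thm2}), producing the quotient $\dist(z,\Lambda)/\dist(z,\Gamma)$ that appears in the statement.

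The growth factor $(1+|z|)^{\pm 2\delta}$ is generated entirely by the head piece. I would express the partial sums by Abel summation,
\begin{equation*}
\sum_{|\gamma|\leq\sqrt{2N}}\delta_\gamma\,f(\gamma,z)=\sum_{k=1}^{N}\Delta_k\, f(\sqrt{2k},z)
\end{equation*}
(with appropriate symmetrisation over the four quadrants), and then invoke the Cesaro-type hypothesis~(3) on $\frac{1}{\log n}|\sum_{k\leq n}\Delta_k|$ after another summation by parts. This yields a main term bounded by $2\delta\log(1+|z|)+O(1)$, whence the desired factor $(1+|z|)^{\pm 2\delta}$. The auxiliary factor $(1+|\im z^2|)^{\mp M}$ comes from isolating those $\gamma\in\Gamma$ for which $|\gamma-z|$ is small because $z$ lies near $\R\cup i\R$: observing that for $\gamma\in\R$ one has $|\gamma-z||\gamma+z|\gtrsim|\im z^2|$, and similarly for $\gamma\in i\R$, one converts the remaining polynomial loss into the prescribed power of $(1+|\im z^2|)$.

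The main obstacle will be the Abel summation step used to convert the Cesaro bound on $\{\Delta_k\}$ into the power growth $(1+|z|)^{2\delta}$, while simultaneously keeping the error terms compatible with the polynomial correction $(1+|z|)^{2M}/(1+|\im z^2|)^{M}$. This mirrors the scheme behind Lemma~\ref{lem1} and the analogous estimates in \cite[Theorem 1.10]{omari2020stability}, but the accumulation of $\Gamma$ on both axes (rather than on a single lattice) forces the symmetrisation to be carried out carefully within each orbit before passing to partial sums, so that cross terms of order $1/|\gamma|$ cancel and only the admissible $O(1/|\gamma|^2)$ remainders survive.
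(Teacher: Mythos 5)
Your plan follows essentially the same route as the paper, which in fact offers no independent proof of Lemma \ref{lem2} but defers entirely to the proof of \cite[Theorem 1.10]{omari2020stability}: that argument proceeds exactly as you describe, by comparing the orbit-grouped perturbed product with $G_\Gamma$, extracting $\dist(z,\Lambda)/\dist(z,\Gamma)$ from the nearest factor via the separation hypothesis (1), converting the Ces\`aro bound (3) on $\sum_{k\leq n}\Delta_k$ into the factor $(1+|z|)^{\pm 2\delta}$ by Abel summation, and obtaining $(1+|\operatorname{Im} z^2|)^{\mp M}$ from the estimate $|\gamma^2-z^2|\gtrsim|\operatorname{Im} z^2|$ for $\gamma\in\mathbb{R}\cup i\mathbb{R}$. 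One small correction to your convergence step: after the fourfold cancellation the leading orbit error is the first-order term $z\sum_{|\gamma|=\sqrt{2n}}\lambda_\gamma^{-1}=O\bigl(|z|\,n^{-3/2}\bigr)$ under hypothesis (2), not $O(1/n^2)\cdot\mathrm{poly}(z)$, but this is still summable in $n$, so uniform convergence on compacta holds as you claim.
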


\section{Proofs of Theorems \ref{thm1} and \ref{thm2}}\label{Secproof}
This section is devoted to the proofs of Theorem \ref{thm1} and \ref{thm2}.

\subsection*{Proof of Theorem \ref{thm1}.}
First, we show that $\Lambda\setminus\{\lambda\}$ is a zero set for $\mathcal{F}^p$, for some fixed (any) $\lambda\in\Lambda$. To this end, it suffices to prove that  $\frac{G_\Lambda}{z-\lambda}$ belongs to $\mathcal{F}^p$. Indeed, by Lemma \ref{lem1} we have
\begin{eqnarray*}
\int_\C \left|\frac{G_\Lambda(z)}{z-\lambda}e^{-\frac{\pi}{2}|z|^2}\right|^p dA(z) & \lesssim & \int_\C \frac{(1+|z|)^{p(-1-\nu+\delta+M)}}{(1+|\im(z)|)^{pM}} dA(z)\\
   & \asymp & \int_\C \frac{1}{(1+|z|)^{p(1+\nu-\delta)}}dA(z),
\end{eqnarray*}
the last integral converges if and only if $p(1+\nu-\delta)>2$. In view of the third assumption, the integral converges. Hence,  $\frac{G_\Lambda}{z-\lambda} \in\mathcal{F}^{p}$ (obviously $z\mapsto \frac{G_\Lambda(z)}{z-\lambda}$ is an entire function).  
\\
Secondly, we prove that $\Lambda$ is a uniqueness set for $\mathcal{F}^p$. Let $F$ be a function of $\mathcal{F}^p$ that vanishes on $\Lambda$. Then there exists an entire function $h$  such that $F=hG_\Lambda$. According to the estimates of $G_\Lambda$ in Lemma \ref{lem1}, we have 
\begin{eqnarray*}
|h(z)|\frac{(1+|\im(z)|)^M}{(1+|z|)^{\nu-\hat{\delta}+M}} \dist(z,\Lambda) & \lesssim & \left|h(z)G_\Lambda(z)\right|e^{-\frac{\pi}{2}|z|^2} = |F(z)|e^{-\frac{\pi}{2}|z|^2} \lesssim 1.
\end{eqnarray*}
This implies that $h$ is a polynomial of $z$, we denote later by $k$ its degree. Integrating the last inequality with respect to the measure $dA(z)$, we get 

\begin{eqnarray*}
\int_\C \left|F(z)e^{-\frac{\pi}{2}|z|^2} \right|^p dA(z) & \gtrsim & \int_\C |h(z)|^p\frac{(1+|\im(z)|)^{pM}}{(1+|z|)^{p(\nu-\hat{\delta}+M)}} \dist(z,\Lambda)^pdA(z)\\
     & \asymp & \int_\C \frac{(1+|\im(z)|)^{pM}}{(1+|z|)^{p(\nu-k-\hat{\delta}+M)}}dA(z) \\
     & \asymp & \int_\C \frac{1}{(1+|z|)^{p(\nu-k-\hat{\delta})}}dA(z) .
\end{eqnarray*}
The last integral converges if and only if $p(\nu-k-\hat{\delta})>2$ and this implies that $k+\hat{\delta}<\nu-2/p$. This is in contradiction with the assumption $\nu-2/p < \hat{\delta}(\Lambda)$. Thus $h$ is zero and $F$ too. This completes the proof of Theorem \ref{thm1}.

\subsection*{Proof of Theorem \ref{thm2}.}
 First, we need the lemma below which is analogous to Lemma 3.4 in \cite{omari2020stability}, we include the proof for completeness. We denote by 
\[d\nu_{p,\alpha,\beta}(z)=\Big(\frac{1+|z|^2}{1+|\im z^2|}\Big)^{\alpha p}\frac{1}{(1+|z|)^{p\beta}} e^{-\frac{p\pi}{2}|z|^2}dA(z),\]
where $1\leq p<\infty$ and $\alpha$ and $\beta$ are two real numbers. 
\begin{lemma}\label{lem3}
Let $\alpha$ and $\beta$ two real numbers. The sin-cardinal type function $G_\Gamma$ 
belongs to $L^p(\C,d\nu_{p,\alpha,\beta})$ if and only if $\beta>\frac{1}{p}$.
\end{lemma}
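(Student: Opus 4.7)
The plan is to derive pointwise two-sided bounds on $|G_\Gamma(z)|e^{-\frac{\pi}{2}|z|^2}$, pass to polar coordinates, and reduce the question to an explicit one-dimensional integral. Starting from the identity $|\sin w|^2 = \sin^2(\re w) + \sinh^2(\im w)$ applied to $w = \frac{\pi}{2}z^2$, one checks that $|\sin(\frac{\pi}{2}z^2)| \lesssim e^{\frac{\pi}{2}|\im z^2|}$ holds pointwise and that $|\sin(\frac{\pi}{2}z^2)| \asymp e^{\frac{\pi}{2}|\im z^2|}$ on any region where $|\im z^2|$ is bounded below (say $|\im z^2| \geq 1$). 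Combined with $|z^2-1|/(\pi|z|^2) \asymp 1$ for $|z|$ large, this yields the uniform upper bound
\[|G_\Gamma(z)|^p e^{-\frac{p\pi}{2}|z|^2} \lesssim e^{-\frac{p\pi}{2}(|z|^2 - |\im z^2|)}\]
on $\{|z| \geq 2\}$, with a matching $\asymp$ lower bound throughout a conical neighborhood of the four diagonals $\arg z = \pi/4 + k\pi/2$.

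I would then switch to polar coordinates $z = re^{i\theta}$, in which $|\im z^2| = r^2|\sin 2\theta|$ and $|z|^2 - |\im z^2| = r^2(1 - |\sin 2\theta|)$. The latter is strictly positive except at the four diagonal angles $\theta_k = \pi/4 + k\pi/2$, near which $1 - |\sin 2\theta| \asymp \phi^2$ after writing $\theta = \theta_k + \phi$. Outside a fixed neighborhood of those four directions, $1 - |\sin 2\theta|$ is bounded below by a positive constant, so the integrand has Gaussian decay $e^{-cr^2}$ in $r$; this absorbs the polynomial weight $\bigl(\frac{1+|z|^2}{1+|\im z^2|}\bigr)^{\alpha p}$ (of order at most $r^{2|\alpha|p}$ near the real or imaginary axes) together with $(1+|z|)^{-p\beta}$, so the off-diagonal part of the integral is finite for every $\beta$.

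The essential contribution therefore comes from the four diagonal strips of angular width $\sim 1/r$. Inside such a strip $|\im z^2| \asymp r^2$, so $(1+|z|^2)/(1+|\im z^2|) \asymp 1$, and the two-sided estimate gives $|G_\Gamma(z)|^p e^{-\frac{p\pi}{2}|z|^2} \asymp e^{-cpr^2\phi^2}$. Evaluating the Gaussian integral $\int e^{-cpr^2\phi^2}\,d\phi \asymp 1/r$ at each fixed $r$, and then combining with the polar Jacobian $r\,dr$ and the weight $r^{-p\beta}$, reduces the entire integral to
\[\int_1^\infty r^{-p\beta}\,dr,\]
which converges exactly when $p\beta > 1$, i.e.\ $\beta > 1/p$. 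Because the underlying estimate is two-sided, both directions of the iff follow simultaneously, and local integrability near $0$ is automatic since $G_\Gamma$ is entire and the weight is bounded near the origin.

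The main delicate point is securing the lower bound $|\sin(\frac{\pi}{2}z^2)| \gtrsim e^{\frac{\pi}{2}|\im z^2|}$ inside the diagonal strips. The real zeros of $\sin(\frac{\pi}{2}z^2)$ all lie on the real and imaginary axes (angles $\theta = k\pi/2$), hence are uniformly bounded away from the four diagonal strips; on the diagonals themselves $z^2$ is purely imaginary, so $\sinh^2(\frac{\pi}{2}\im z^2)$ dominates cleanly in $|\sin(\frac{\pi}{2}z^2)|^2$. This is the step where the specific geometry of the Ascensi-Lyubarskii-Seip sequence $\Gamma$ on the two coordinate axes is crucial.
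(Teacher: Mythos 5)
Your proof is correct, and it reaches the conclusion by a genuinely different computational route than the paper's. Both arguments start identically: the identity $|\sin w|^2=\sin^2(\re w)+\sinh^2(\im w)$ reduces the lemma to the convergence of the integral of $e^{-\frac{p\pi}{2}(|z|^2-|\im z^2|)}$ against the polynomial weight. From there the paper stays in Cartesian coordinates, exploiting that in the octant $0<y<x$ one has $|z|^2-|\im z^2|=(x-y)^2$; after a translation in $y$ and Tonelli, the question becomes the convergence of the explicit one-dimensional integral $\int_{\max\{1,y\}}^{\infty} x^{-p(\beta-2\alpha)}(1+x^2-xy)^{-p\alpha}\,dx$, which converges iff $\beta>1/p$ since $1+x^2-xy\asymp x^2$ as $x\to\infty$. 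You instead pass to polar coordinates and run a Laplace-type localization: the factor $e^{-\frac{p\pi}{2}r^2(1-|\sin 2\theta|)}$ confines the mass to the four diagonal strips of angular width $\asymp 1/r$, where $\bigl(\frac{1+|z|^2}{1+|\im z^2|}\bigr)^{\alpha p}\asymp 1$, and the angular Gaussian integral contributes the factor $1/r$ that cancels the Jacobian, leaving $\int_1^\infty r^{-p\beta}\,dr$ two-sidedly. Your route buys transparency: it shows exactly where the mass concentrates and makes the independence of the answer from $\alpha$ conceptually clear (the weight is comparable to $1$ precisely on the diagonal strips, and the Gaussian decay $e^{-cr^2}$ absorbs any polynomial elsewhere), whereas the paper's computation extracts the same facts only from the final asymptotics of the $x$-integral. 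Two details to write out if you formalize this: the comparison $1-|\sin 2\theta|\asymp \phi^2$ yields different Gaussian constants in the upper and lower bounds, which is harmless because each integrates in $\phi$ to $\asymp 1/r$; and for the lower bound on the compact part of the cone, say $1\le r\le 2$, either note that $G_\Gamma$ has no zeros there (all its zeros lie on the coordinate axes, and are thus uniformly far from the diagonal cone) or simply restrict the divergence argument to $r\ge 2$, which suffices for necessity. Your closing remark slightly overstates the role of the zero locations for the lower bound on the diagonals themselves: there the dominance of $\sinh^2\bigl(\frac{\pi}{2}\im z^2\bigr)$ alone gives $|\sin(\frac{\pi}{2}z^2)|\gtrsim e^{\frac{\pi}{2}|\im z^2|}$ once $|\im z^2|\ge 1$, with no reference to where the zeros sit.
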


\begin{proof}
Let $1\leq p<\infty$. Recall that,
\[ \left| \sin(z)\right|^2=   \left(\sin\big(\re z ) \right)^2  +\left(\sinh(\im z)  \right)^2, \qquad z\in\mathbb{C}.\]
It follows that,  $G_{\Gamma}$ belongs to  $L^{p}(\mathbb{C}, d\nu_{p,\alpha,\beta})$ if and only if $\sinh\left(\frac{\pi}{2}z^2\right)$ does. This is equivalent to 
\[ \int_{\mathbb{C}}  \left(\frac{e^{\frac{\pi}{2} |\im z^2|}  }{1+|z|^{\beta}}\right)^{p} \left(\frac{1+|z|^2}{1+|\im z^2|}\right)^{p \alpha }   e^{-\frac{p \pi}{2}|z|^{2}}dA(z)<\infty. \]
Using Tonelli theorem,  we obtain
\begin{align}\label{integI}
   I & := \int_{|z|>1}  \frac{1}{(1+|z|)^{p\beta-2p\alpha}} \frac{1}{(1+|\im z^2|)^{p\alpha}}  e^{-\frac{p\pi}{2} (|z|^2-|\im z^2|)}dA(z)\nonumber  \\
      &\asymp 8 \int_{1}^{\infty} \int_{0}^{x} \frac{e^{-\frac{p\pi}{2} (x-y)^{2} }  } {(x^2+y^2)^{p(\beta-2\alpha)/2}} \frac{1}{(1+xy)^{p \alpha} } dy dx.   
\end{align}
On the other hand, we have
\begin{align}\label{changeI}
\int_{0}^{x} \frac{e^{-\frac{p\pi}{2} (x-y)^{2} }} {(x^2+y^2)^{p(\beta-2\alpha)/2}} \frac{1}{(1+xy)^{p \alpha}} dy  &   
\asymp \frac{1}{x^{p(\beta-2\alpha)}}\int_{0}^{\infty} \frac{e^{-\frac{p\pi}{2} y^{2} }}{  (1+x^2-xy)^{p \alpha}}\chi_{[0,x]}(y) dy.
\end{align}
Combining \eqref{integI} and \eqref{changeI},  we obtain
\begin{align*}
   I & \asymp \int_1^\infty \int_0^\infty   \frac{1}{x^{p(\beta-2\alpha)}}   \frac{e^{-\frac{p\pi}{2} y^2}}{  (1+x^2-xy)^{p\alpha}  }\chi_{[0,x]}(y) dx dy\\
   & = \int_{0}^{\infty}  e^{-\frac{p\pi}{2} y^{2} } \int_{\max\{1,y\}}^{\infty} \frac{1}{ x^{p(\beta-2\alpha)}(1+x^2-xy)^{p \alpha} } dxdy.
\end{align*}
Consequently, the integral needed converges if and only if 
 $$\int_{\max\{1,y\}}^{\infty} \frac{1}{  x^{p(\beta-2\alpha)}(1+x^2-xy)^{p \alpha} } dx $$
converges too. That is,  if and only if, $\beta>\frac{1}{p}$. 
\end{proof}

Now, we can start the proof Theorem \ref{thm2}. First, we will show that $\Lambda\setminus\{\lambda\}$ is a zero set for $\mathcal{F}^p$, for fixed $\lambda\in\Lambda$ and every $1<p<\infty$. To do this, it suffices to prove that $\frac{G_\Lambda}{z-\lambda}$ belongs to $\mathcal{F}^p$. According to Lemma \ref{lem2} and by a sub-harmonicity argument, we have 
\begin{align*}
\int_\C\left|\frac{G_\Lambda(z)}{z-\lambda}e^{-\frac{\pi}{2}|z|^2}\right|^p dA(z) & \asymp \int_\C\left|\frac{G_\Lambda(z)}{z-\lambda}\frac{\dist(z,\Gamma)}{\dist(z,\Lambda)}e^{-\frac{\pi}{2}|z|^2}\right|^p dA(z) \\
      & \lesssim \int_\C |G_\Gamma(z)|^p\left(\frac{1+|z|^2}{1+|\im z^2|}\right)^{pM}\frac{e^{-\frac{p\pi}{2}|z|^2}}{(1+|z|)^{p(1-2\delta)}}dA(z)\\
      & =  \int_\C |G_\Gamma(z)|^pd\nu_{p,M,1-2\delta}(z).
\end{align*}
By Lemma \ref{lem3}, the last integral converges since $2\delta<1-\frac{1}{p}=\frac{1}{q}.$\\

To prove that $\Lambda$ is a uniqueness set for $\mathcal{F}^p$, $1<p<\infty$, let $F$ be a function of $\mathcal{F}^p$ that vanishes on $\Lambda$. Write $F(z)=h(z)G_\Lambda(z)$, for some entire function $h$. Again, by Lemma \ref{lem2} we have

\begin{align}\label{est1}
|F(z)| \frac{\dist(z,\Gamma)}{\dist(z,\Lambda)} 
\gtrsim |h(z)G_\Gamma(z)|  \frac{(1+|\im z^2|)^M}{(1+|z|)^{2\delta+2M}}.
\end{align}
Integrating both sides of the last inequality over $\C$  with respect to the measure $d\mu_{p\pi}(z)=e^{-\frac{p\pi}{2}|z|^2}dA(z)$
\begin{align*}
\int_\C\left|F(z)\right|^p d\mu_{p\pi}(z) & \asymp \int_\C\left|F(z)\frac{\dist(z,\Gamma)}{\dist(z,\Lambda)} \right|^p d\mu_{p\pi}(z) \\
     & \gtrsim \int_\C \left|h(z)G_\Gamma(z) \frac{(1+|\im z^2|)^M}{(1+|z|)^{2\delta+2M}}\right|^p d\mu_{p\pi}(z)\\
      & \gtrsim \int_\C \left| \frac{h(z)G_\Gamma(z)}{(1+|z|)^{2\delta+2M}}\right|^p d\mu_{p\pi}(z).
\end{align*}
In the second line, we have used a sub-harmonicity argument. Since $2\delta< \frac{1}{\max\{p,q\}}\leq \frac{1}{2}$, then for a fixed $\gamma\in\Gamma$, we have
\begin{align*}
  \int_\C \left| \frac{h(z)G_\Gamma(z)}{(1+|z|)^{2\delta+2M}}\right|^p d\mu_{p\pi}(z) &  \gtrsim \int_\C \left| \frac{h(z)G_\Gamma(z)}{(z-\gamma)P_{2M}(z)}\right|^p d\mu_{p\pi}(z),
\end{align*}
where $P_{2M}$ is  a polynomial of degree $\lfloor2M\rfloor+1$, that vanishes on $\lfloor2M\rfloor+1$ points on $\Gamma\setminus\{\gamma\}$. This implies that the function $z\mapsto \frac{h(z)G_\Gamma(z)}{(z-\gamma)P_{2M}(z)}$ belongs to $\mathcal{F}^p$. Since the sequence $\Gamma\setminus\{\gamma\}$ is a maximal zero sequence for $\mathcal{F}^p$, then $h$ must be a polynomial of degree less than $\lfloor2M\rfloor+1$ (see \cite{Zhu,zhu2011maximal}). Suppose that $h$ is not zero and denote $k$ its degree. Now, return to \eqref{est1} and integrate both sides over $\C$ with respect to the measure $d\mu_{p\pi}$ again, we obtain

\begin{align*}
\int_\C|F(z)|^p d\mu_{p\pi}(z) & \asymp \int_C\left|F(z)\frac{\dist(z,\Gamma)}{\dist(z,\Lambda)} \right|^p d\mu_{p\pi}(z) \\
     & \gtrsim \int_\C \left|h(z)G_\Gamma(z) \frac{(1+|\im z^2|)^M}{(1+|z|)^{2\delta+2M}}\right|^p d\mu_{p\pi}(z)\\
     & \asymp \int_\C \left|G_\Gamma(z) \frac{(1+|\im z^2|)^M}{(1+|z|)^{2\delta+2M-k}}\right|^p d\mu_{p\pi}(z)\\
     & = \int_\C|G_\Gamma(z)|^p d\nu_{p,-M,2\delta-k}(z).
\end{align*}
By Lemma \ref{lem3}, the latter integral converges if and only if $2\delta-k>1/p$. Since $\delta<1/(2p)$, we then get $$1/p<2\delta-k<2\delta< 1/p.$$
This is a contradiction. Hence, $h$ and $F$ are zero. Therefore, $\Lambda$ is a uniqueness set for $\mathcal{F}^p$. This completes the proof of Theorem   \ref{thm2}.

\section{Proof of Theorem \ref{thm3}}

The proof of Theorem \ref{thm3} is essentially based on  Lindel\"{o}f's theorem below.  First, we recall some main tools  very useful for our proof, we refer to \cite{Boa,Lev} for more details. If $f$ is an entire function and $r$ a positive real number, we denote $ M(r,f)$ the maximum modulus of $f$ on the circle $|z|=r$ \[M(r,f)=\underset{|z|=r}{\max}|f(z)|.\] The order of $f$ is given by the quantity
\[\rho_{f}:=\underset{r\rightarrow\infty}{\limsup}\ \frac{\log\log M(r,f)}{\log(r)}.\]
Always, we have $0\leq \rho_{f}\leq\infty$. In the case $0<\rho_{f} <\infty$, the type of $f$ is defined by $$\tau_f:= \underset{r\rightarrow\infty}{\limsup}\ \frac{\log M(r,f)}{r^{\rho_{f}}}.$$ 
Let $Z=\{z_n\}_{n\in\mathbb{N}}$ be the zero set of an entire function $f$. Following \cite{Boa}, the convergence exponent of the sequence $Z=\{z_n\}_{n\in\mathbb{N}}$ (excluding $0$ if it belongs to $Z$) is defined as the infimum of all positive numbers $s$ such that 
\begin{equation}
    \sum_{n\in\mathbb{N}} \frac{1}{|z_n|^s}<\infty,
\end{equation}
it will be denoted by $\rho_1$ (for short, the convergence exponent of $f$).
 A consequence of Jensen's formula gives the following relations among the order $\rho_f$ and the exponent of convergence $\rho_1$ of an entire function $f$  (see, \cite{Boa} for complete proof):
\begin{equation}\label{ExponentAndorder}
    \rho_1\leq \rho_f.
\end{equation}
The following theorem characterizes entire functions of integral order and of finite type.
\begin{theorem}[Lindel\"{o}f, \cite{Boa}]\label{Lindlof}
If $\rho$ is a positive integer, the entire function $f$ of order $\rho_f=\rho$ is of finite type if and only if 
\begin{enumerate}
\item $n(r)=O(r^{\rho_f})$, where $n(r)$ is the number of zeros of $f$ in the disk $|z|\leq r$, counting multiplicity,
\item and the sums 
\begin{equation}
S(r) := \sum_{|z_n|\leq r}\frac{1}{z_n^\rho}
\end{equation}
are bounded,  where $\{z_n\}_{n}$ is the zero sequence of $f$.
\end{enumerate}
\end{theorem}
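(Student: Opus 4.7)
The plan is to prove this classical Lindel\"{o}f characterization via Hadamard's factorization theorem, combined with refinements of Jensen's formula. Since $f$ has finite positive integer order $\rho_{f}=\rho$, Hadamard's theorem allows us to write $f(z)=z^{m}e^{Q(z)}P(z)$, where $Q$ is a polynomial of degree at most $\rho$ and $P(z)=\prod_{n}E_{p}(z/z_{n})$ is the canonical Weierstrass product over the zero sequence $\{z_{n}\}$, with genus $p\leq \rho$. The only nontrivial situation is $p=\rho$, because if $p\leq \rho-1$ then $\sum_{n}1/|z_{n}|^{\rho}<\infty$ and condition~(2) is automatic.

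For the forward implication, assume $f$ has finite type, so $\log M(r,f)\lesssim r^{\rho}$. Condition~(1) follows from Jensen's formula: after dividing out the zero at the origin, $\int_{0}^{R}n(t)/t\,dt\leq \log M(R,f)-\log|f(0)|$, and pairing this with $n(r)\log 2\leq \int_{r}^{2r}n(t)/t\,dt$ gives $n(r)=O(r^{\rho})$. For condition~(2), the idea is to take the $\rho$-th derivative of the logarithmic derivative $f'/f$ at a suitable point and reinterpret the result as a sum over the zeros. Equivalently, one may use the Poisson--Jensen representation of $\log|f|$ on $|z|=R$ to extract the Fourier coefficient of index $\rho$, which after elementary manipulations equals $-S(R)/\rho$ up to a fixed constant coming from $Q$ and a correction controlled by $n(R)/R^{\rho}$. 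Since the growth hypothesis bounds this Fourier coefficient uniformly in $R$, and (1) controls the correction term, $S(R)$ must stay bounded.

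For the converse, assume (1) and (2). From (1), the exponent of convergence satisfies $\rho_{1}\leq \rho$, so the canonical product $P$ has genus at most $\rho$. To estimate $\log|P(z)|$ we split the zero set into near zeros $|z_{n}|\leq 2|z|$ and far zeros $|z_{n}|>2|z|$. For the far zeros, the tail estimate $|\log E_{\rho}(u)|\lesssim |u|^{\rho+1}$ for $|u|\leq 1/2$, combined with Abel summation against $n(t)=O(t^{\rho})$, contributes $O(|z|^{\rho})$. For the near zeros, the coarser bound $|\log E_{\rho}(u)|\lesssim |u|^{\rho}+\log^{+}|u|$ together with (1) produces a contribution $O(|z|^{\rho})+|z|^{\rho}\,|S(2|z|)|$, and the second piece is exactly what hypothesis~(2) controls. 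The polynomial factor $e^{Q(z)}$ of degree at most $\rho$ contributes $\exp(O(|z|^{\rho}))$, so assembling the estimates yields $\log M(r,f)=O(r^{\rho})$, i.e.\ $f$ has finite type.

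The principal obstacle is the delicate cancellation required in the borderline case $p=\rho$. There the series $\sum 1/|z_{n}|^{\rho}$ need not converge absolutely, so boundedness of $S(r)$ is genuinely an oscillation condition that cannot be replaced by any pointwise estimate on $|z_{n}|$. Making this cancellation work cleanly --- extracting $S(R)$ as a Fourier coefficient of $\log|f(Re^{i\theta})|$ in the forward direction, and conversely feeding the boundedness of $S(R)$ back into the Weierstrass estimate so that the linear correction terms inside $E_{\rho}$ do not spoil the bound --- is the technical heart of the proof.
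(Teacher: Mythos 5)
The paper itself offers no proof of this statement: it is quoted as Lindel\"{o}f's classical theorem with a citation to Boas's \emph{Entire Functions}, so there is no internal argument to compare yours against. Your sketch reconstructs the standard textbook proof (essentially the one in Boas, Section 2.10): Jensen's formula for $n(r)=O(r^{\rho})$; extraction of the $\rho$-th Fourier coefficient of $\theta\mapsto\log|f(Re^{i\theta})|$, which equals a fixed multiple of $S(R)$ plus a constant coming from the coefficient of $z^{\rho}$ in $Q$ plus a term dominated by $n(R)/R^{\rho}$, for the boundedness of $S(r)$ (here you should also record that $\int_{0}^{2\pi}\bigl|\log|f(Re^{i\theta})|\bigr|\,d\theta=O(R^{\rho})$, which needs Jensen again since $\log|f|$ is unbounded below, not just $\log|f|\leq\log M(R,f)$); and Hadamard factorization with a near/far splitting of the zeros for the converse. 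This is the right architecture, and the reduction to the borderline genus $p=\rho$ is correctly identified.

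One step of the converse would fail as literally written. You claim the near zeros $|z_n|\leq 2|z|$ contribute $O(|z|^{\rho})+|z|^{\rho}\,|S(2|z|)|$ via the coarse bound $|\log E_{\rho}(u)|\lesssim|u|^{\rho}+\log^{+}|u|$ together with condition (1). A termwise absolute-value bound can never produce $|S(2r)|$: it yields $r^{\rho}\sum_{|z_n|\leq 2r}|z_n|^{-\rho}$, and this absolute sum may grow like $r^{\rho}\log r$ even when $S$ is bounded (already for the zeros of the sine function, where $S(r)\equiv 0$ by symmetry while $\sum_{0<|n|\leq N}1/|n|\sim 2\log N$). The repair --- which your closing paragraph gestures at but never states --- is to factor $E_{\rho}(u)=E_{\rho-1}(u)\,e^{u^{\rho}/\rho}$ on the near zeros, so that the near-zero part of the product equals $\prod_{|z_n|\leq 2r}E_{\rho-1}(z/z_n)\cdot\exp\bigl(\tfrac{z^{\rho}}{\rho}\,S(2r)\bigr)$. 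The $E_{\rho-1}$ factor is $O(r^{\rho})$ in the logarithm by Abel summation against $n(t)=O(t^{\rho})$ (the exponent drops from $\rho$ to $\rho-1$, so the absolute sum $\sum_{|z_n|\leq 2r}|z_n|^{1-\rho}\lesssim r$ converges at the right rate), and hypothesis (2) bounds the exponential factor while preserving the cancellation in $S$. With that substitution your outline is the complete classical proof; without it, the near-zero estimate is genuinely wrong rather than merely terse.
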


In the sequel, our constructions are based on dividing the complex plane into sectors with some defined opening aperture. To this end, for a given two angles $\beta\in(-\pi,\pi]$ and $\theta\in(0,\pi]$ define  
\[ \mathcal{S}(\beta,\theta):=\{z\in\C\ :\ |\arg(z)-\beta|\leq\theta\}\cup\{z\in\C\ :\ |\arg(-z)-\beta|\leq\theta \}.\]
The following lemma will be of prominent role in the proof of Theorem \ref{thm3} later on.
\begin{lemma} \label{lemmaCN}
Let $1\leq p<\infty$. If $Z=\{z_n\}_{n\in\mathbb{N}}$ is a zero set for $\mathcal{F}^{p}_{}$ such that $Z\subset \mathcal{S}(\beta,\theta)$ for some $0\leq\theta<\frac{\pi}{4}$ and $\beta\in(-\pi,\pi]$, then $Z$ satisfies 
\begin{equation} \label{SC}
\sum_{n\in\mathbb{N}} \frac{1}{|z_n|^{2}}<\infty.
\end{equation}
\end{lemma}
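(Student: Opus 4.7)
My plan is to combine Lindelöf's Theorem \ref{Lindlof} with a soft geometric observation about what the sector hypothesis forces on the numbers $z_n^2$. Let $f\in\mathcal{F}^p\setminus\{0\}$ have $Z$ as its zero set, counted with multiplicities. Every nonzero element of $\mathcal{F}^p$ satisfies the standard pointwise bound $|f(z)|\lesssim e^{\frac{\pi}{2}|z|^2}$ (by the usual subharmonicity/mean-value argument on $|f|^p e^{-\frac{p\pi}{2}|\cdot|^2}$), so $\rho_f\leq 2$, and if $\rho_f=2$ its type is at most $\pi/2$. If $\rho_f<2$, the inequality $\rho_1\leq\rho_f$ from \eqref{ExponentAndorder} already gives $\sum_n|z_n|^{-2}<\infty$ independently of any sectorial information, so the interesting case is $\rho_f=2$.

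In that case I would apply Theorem \ref{Lindlof} with $\rho=2$; the finite-type hypothesis guarantees that the partial sums
\[S(r):=\sum_{|z_n|\leq r}\frac{1}{z_n^{2}}\]
are bounded uniformly in $r$. Now I would invoke $Z\subset\mathcal{S}(\beta,\theta)$: since this sector is symmetric under $z\mapsto -z$, squaring collapses its two lobes and forces $\arg(z_n^{2})\in[2\beta-2\theta,\,2\beta+2\theta]\pmod{2\pi}$ for every $n$. Because $0\leq\theta<\pi/4$, one has $2\theta<\pi/2$, hence the numbers $e^{2i\beta}/z_n^{2}$ all lie in the closed right half-plane and in fact satisfy
\[\re\!\left(\frac{e^{2i\beta}}{z_n^{2}}\right)\geq \frac{\cos(2\theta)}{|z_n|^{2}}.\]
Summing this inequality over $|z_n|\leq r$ gives
\[\cos(2\theta)\sum_{|z_n|\leq r}\frac{1}{|z_n|^{2}}\leq \re\bigl(e^{2i\beta}S(r)\bigr),\]
whose right-hand side is uniformly bounded by the previous step. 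Since $\cos(2\theta)>0$, letting $r\to\infty$ yields \eqref{SC}.

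The principal (and essentially only) technical point is verifying the two hypotheses of Theorem \ref{Lindlof} for $f\in\mathcal{F}^p$ of order exactly $2$, namely $n(r)=O(r^{2})$ and finiteness of the type. Both are standard Fock-space facts: the Gaussian pointwise bound gives $M(r,f)\lesssim e^{\pi r^{2}/2}$ (whence finite type $\leq\pi/2$), and Jensen's formula applied to this bound produces $n(r)=O(r^{2})$. The sector hypothesis itself is used only through the elementary rotation argument above; note that this argument would fail exactly at $\theta=\pi/4$, because the image of $\mathcal{S}(\beta,\theta)$ under $z\mapsto z^{2}$ would then cease to fit in an open half-plane, so the threshold $\theta<\pi/4$ in the statement is the natural one for this proof.
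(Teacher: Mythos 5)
Your proof is correct and takes essentially the same route as the paper's: both hinge on Lindel\"of's Theorem \ref{Lindlof} forcing the partial sums $S(r)$ to stay bounded for the order-$2$, finite-type Fock function, together with the observation that $2\theta<\pi/2$ prevents cancellation among the $z_n^{-2}$, yielding $\cos(2\theta)\sum_{|z_n|\le r}|z_n|^{-2}\le|S(r)|$. The only cosmetic differences are that you argue directly with a case split on $\rho_f$ (thereby bypassing the paper's appeal to \cite[Theorem 5.1]{Zhu} to pin down $\rho_1=2$ in its contradiction argument) and keep $\beta$ general via rotation instead of normalizing $\beta=0$.
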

\begin{proof}
Without loss of generality, we may suppose  $\beta=0$. Aiming to come to a contradiction, assume that 
\begin{equation}\label{Dv}
\underset{n\in\mathbb{N}}{\overset{}{\sum}}\ \frac{1}{|z_n|^2}\ =\ \infty.
\end{equation}
 If $g$ is a function in  $\mathcal{F}^{p}_{}$ with $Z(g)=Z$, then by  \cite[Theorem 5.1]{Zhu}, for every $\epsilon>0$ we have 
\begin{equation}\label{NC}
\sum_{n\in\mathbb{N}} \frac{1}{|z_n|^{2+\epsilon}}<\infty.
\end{equation}
Thus, $Z=\{z_n\}_{n\in\mathbb{N}}$ is of convergence exponent $\rho_1=2$.
Combining \eqref{ExponentAndorder}, \eqref{NC} and \eqref{Dv} we obtain 
\[ 2=\rho_1\leq \rho_g\leq 2. \]
Hence,  $g$ is of order  $2$,  and  of type $\tau_{g}$ less than or equal to $\frac{\pi}{2}$. Since $g$ is of integral order, Lindel\"{o}f's 
Theorem applies.
 Writing $z_n=|z_n|e^{i\theta_n}$, $n\in\mathbb{N}$, a straightforward calculation gives
\begin{eqnarray*}
|S(r)|^2 & = & \left|\underset{|z_n|\leq r}{\sum} \frac{1}{z_n^2}\right|^2=\left|\underset{|z_n|\leq r}{\sum} \frac{e^{-2i\theta_n}}{|z_n|^2}\right|^2 \\
  & = & \left|\underset{|z_n|\leq r}{\sum} \frac{\cos\big(2\theta_n\big)-i\sin\big(2\theta_n\big)}{|z_n|^2}\right|^2\\
   & = & \left(\underset{|z_n|\leq r}{\sum} \frac{\cos\big(2\theta_n\big)}{|z_n|^2}\right)^2 + \left(\underset{|z_n|\leq r}{\sum} \frac{\sin\big(2\theta_n\big)}{|z_n|^2}\right)^2 \\
   & \geq & \left(\underset{|z_n|\leq r}{\sum} \frac{\cos\big(2\theta_n\big)}{|z_n|^2}\right)^2 \\
    & \geq & \cos^2(2\theta)\left(\underset{|z_n|\leq r}{\sum} \frac{1}{|z_n|^2}\right)^2 \longrightarrow\infty, \qquad \mbox{as} \ r\rightarrow \infty.
\end{eqnarray*}
This contradicts Lindel\"{o}f's Theorem \ref{Lindlof}.  As a conclusion
\begin{equation*}
\underset{n=0}{\overset{\infty}{\sum}}\ \frac{1}{|z_n|^2}\ < \infty.
\end{equation*}
\end{proof}
\begin{remark}
We mention that the constant  $\frac{\pi}{4}$, that appears in  Lemma \ref{lemmaCN}, is the best possible. A counterexample is given by the zero set of the sin-cardinal function
\[ s(z)=\frac{\sin(\frac{\pi}{2}z^2)}{z^2}\in\mathcal{F}^{p}.\]
\end{remark}
Now, we can prove Theorem \ref{thm3}.\

\subsection*{Proof of Theorem \ref{thm3}.} 
Let $Z=\{z_n\}_{n\in\mathbb{N}}$ be a sequence of complex numbers which is a zero sequence for $\mathcal{F}^{p}_{}$ and satisfies \eqref{SufficientCond}. If $Z'$ is any sub-sequence of $Z$, then it is a zero sequence too for $\mathcal{F}^{p}_{}$ by the sufficient condition \cite[Theorem 5.3]{Zhu}.  Therefore, $Z$ belongs to the desired class.\\
Conversely,  let $Z$ be a zero sequence for $\mathcal{F}^{p}_{}$ with the property: every subset of $Z$ is also a zero set for $\mathcal{F}^p$.  Dividing $Z$ into eight subset, by writing 
\[\displaystyle Z=\bigcup_{k=0}^{7}Z_k,\]
where for each $0\leq k\leq 7$ 
\[Z_{k}:=Z\cap\{z\in\C\ :\ -\frac{\pi}{8}\leq \mathrm{arg}(z)-\frac{k\pi}{4}<\frac{\pi}{8}\}\subset \mathcal{S}(\frac{\pi}{8}, \frac{\pi}{8}+\varepsilon),\]
and $\varepsilon$ is an arbitrary small positive number in $(0,\frac{\pi}{8})$. By the assumption, each $Z_{k}$,  $k\in\{0,1,\cdots,7\}$ is a zero set for $\mathcal{F}^{p}$. We conclude by Lemma \ref{lemmaCN}. 

\subsection*{Acknowledgments}
The authors are deeply grateful to Professor O. El-Fallah (Mohammed V university, Faculty of Sciences in Rabat)  for many helpful discussions, suggestions and remarks.

\bigskip

\end{document}